\theoremstyle{plain}
\newtheorem{ourtheorem}{Theorem}[section]
\newtheorem{ourcorollary}[ourtheorem]{Corollary}
\newtheorem{ourlemma}[ourtheorem]{Lemma}
\newtheorem{ourproposition}[ourtheorem]{Proposition}
\theoremstyle{definition}
\newtheorem{ourexample}[ourtheorem]{Example}
\newtheorem{ourremark}[ourtheorem]{Remark}
\newcommand{\kernel}{\operatorname{Ker}}
\newcommand{\Dom}{\operatorname{Dom}}
\newcommand{\Rat}{{\mathrm{Rat}}}
\newcommand{\re}{\operatorname{Re}}
\newcommand{\matr}[2]{\ensuremath{\left[\begin{array}{#1}#2\end{array} \right]}}
\newcommand{\al}{\alpha}
\newcommand{\be}{\beta}
\newcommand{\ga}{\gamma}
\newcommand{\la}{\lambda}
\newcommand{\si}{\sigma}
\newcommand{\om}{\omega}\newcommand{\Om}{\Omega}
\newcommand{\BC}{{\mathbb C}}\newcommand{\BD}{{\mathbb D}}
\newcommand{\BN}{{\mathbb N}}
\newcommand{\BP}{{\mathbb P}}
\newcommand{\BT}{{\mathbb T}}
\newcommand{\cP}{{\mathcal P}}
\newcommand{\whatA}{\widehat{A}}\newcommand{\whatB}{\widehat{B}}
\newcommand{\whatC}{\widehat{C}}\newcommand{\whatD}{\widehat{D}}
\newcommand{\ov}[1]{{\overline{#1}}}
\newcommand{\tu}[1]{{\textup{#1}}}
\newcommand{\mat}[1]{\ensuremath{\begin{bmatrix} #1 \end{bmatrix}}}
\newcommand{\sbm}[1]{\left[\begin{smallmatrix} #1\end{smallmatrix}\right]}
\begin{document}

\title[Matrix representation and spectral analysis of unbounded Toeplitz operators]{A Toeplitz-like operator with rational matrix symbol having poles on the unit circle:\ Matrix representation and spectral analysis}

\author[G.J. Groenewald]{G.J. Groenewald}
\address{G.J. Groenewald, School of Mathematical and Statistical Sciences,
North-West University,
Research Focus: Pure and Applied Analytics,
Private~Bag X6001,
Potchefstroom 2520,
South Africa.}
\email{Gilbert.Groenewald@nwu.ac.za}

\author[S. ter Horst]{S. ter Horst}
\address{S. ter Horst, School of Mathematical and Statistical Sciences,
North-West University,
Research Focus: Pure and Applied Analytics,
Private~Bag X6001,
Potchefstroom 2520,
South Africa
and
DSI-NRF Centre of Excellence in Mathematical and Statistical Sciences (CoE-MaSS),
Johannesburg,
South Africa}
\email{Sanne.TerHorst@nwu.ac.za}

\author[J. Jaftha]{J. Jaftha}
\address{J. Jaftha, Numeracy Center, University of Cape Town, Rondebosch 7701, Cape Town, South Africa}
\email{Jacob.Jaftha@uct.ac.za}

\author[A.C.M. Ran]{A.C.M. Ran}
\address{A.C.M. Ran, Department of Mathematics, Faculty of Science, VU Amsterdam, De Boelelaan 1111, 1081 HV Amsterdam, The Netherlands and Research Focus: Pure and Applied Analytics, North-West University, Potchefstroom, South Africa}
\email{a.c.m.ran@vu.nl}

\keywords{Toeplitz operators, unbounded operators, semi-infinite Toeplitz matrices, state space systems, spectrum}

\subjclass{Primary 47B35, 47A53; Secondary 93C05}

\begin{abstract}
In this paper we consider a class of unbounded Toeplitz operators with rational matrix symbols that have poles on the unit circle and employ state space realization techniques from linear systems theory, as used in our earlier analysis in \cite{GtHJR24b} of this class of operators, to study the connection with semi-infinite Toeplitz matrices and to 
determine the essential spectrum and resolvent set.
\end{abstract}

%47B35(1973–now)Toeplitz operators, Hankel operators, Wiener-Hopf operators For other integral operators, see also 45P05, 47G10 [See also 32A25, 32M15]
%47A53(1980–now)(Semi-) Fredholm operators; index theories [See also 58B15, 58J20]
%47A68(1980–now)Factorization theory (including Wiener-Hopf and spectral factorizations) of linear operators
%93C05 Linear systems

\maketitle

\section{Introduction}

In \cite{GtHJR18} a class of unbounded Toeplitz-like operators was introduced, whose symbols are rational functions with poles on the unit circle. The case where the symbol is matrix-valued was first studied in \cite{GtHJR21}. Let $\Omega(z)$ be a rational $m\times m$ matrix function with poles on the unit circle. Then the unbounded Toeplitz-like operator $T_\Om \left (H^p_m \rightarrow H^p_m \right )$ with symbol $\Omega(z)$ from \cite{GtHJR21} is defined  as
\begin{align*}
\Dom(T_\Om)=&\left\{
f\in H^p_m : \Om f = h + r \textrm{ where } h\in L^p_m(\BT),\mbox{ and } r \in\Rat_0^{m}(\BT)\right\}\\
T_\Om f = \BP h, &\textrm{ where } \BP \textrm{ is the Riesz projection of } L^p_m(\BT) \textrm{ onto }H^p_m.
\end{align*}
Here, for $1<p<\infty$, $L^p_m(\BT)$ and $H^p_m$ denote the spaces of vector-valued functions of length $m$ with entries in the Lebesgue space $L^p(\BT)$ over the unit circle $\BT$ and with entries in the Hardy space  $H^p$ over the unit disc $\BD$, respectively, and $\Rat_0^{k}(\BT)$ is the set of strictly proper rational vector-valued functions of length $k$ with poles only on the unit circle.

Assuming that $\det \Om(z)$ is not uniformly zero, in \cite{GtHJR21} a Wiener-Hopf type factorization is determined for $\Om(z)$ leading to a factorization of the Toeplitz-like operator $T_\Om$, from which it can be derived that $T_\Om$ is Fredholm if and only if $\Om(z)$ has no zeroes on $\BT$. Here a zero of $\Om(z)$ corresponds to a pole of $\Om(z)^{-1}$ in which case $\det \Om(z)$ need not be zero, unlike in the case where there are no poles on $\BT$. The Wiener-Hopf type factorization allows one to determine the Fredholm index, in case $\Om(z)$ has no zeroes on $\BT$. However, due to the complicated structure of this Wiener-Hopf type factorization, it appears difficult to determine the dimensions of the kernel and co-kernel of $T_\Om$, and hence whether $T_\Om$ is invertible.  

Using state space representations, in \cite{GtHJR24b} we determine a criterion for invertibility of $T_\Om$ in terms of an associated algebraic Riccati equation. In that setting, we assume that $\Om(z)$ is given by a minimal state space realization of the form  
\begin{equation}\label{Omreal}
\Omega(z)=R_0+zC(I-zA)^{-1}B+\gamma(z I-\alpha)^{-1}\beta,
\end{equation}
where $R_0$, $A,B,C$, and $\al,\be,\ga$ are matrices of appropriate size, and $I$ indicates an identity matrix of appropriate size, such that $A$ has all its eigenvalues in the open unit disc $\BD$ and $\al$ has all its eigenvalues in the closed unit disc $\ov{\BD}$, i.e., $A$ is stable and $\al$ is semi-stable. Note that each rational matrix function $\Om(z)$ admits such a realization. Minimality of the realization \eqref{Omreal} means that there does not exists a realization of the same form with $A$ or $\al$ of smaller size, and implies that the poles of $\Om(z)$ in $\ov{\BD}$ correspond to the eigenvalues of $\alpha$ and the poles in $\BC\backslash \BD$ correspond to the inverses of the eigenvalues of $A$ (to be interpreted as $\infty$ if $0$ is an eigenvalue of $A$). This is equivalent to the triples $(C,A,B)$, and $(\ga,\al,\be)$ both being discrete-time observable and controllable, cf., \cite{HRvS21}, which we will abbreviate by saying that the triples $(C,A,B)$ and $(\ga,\al,\be)$ are minimal. In the present paper we further exploit the state space approach initiated in \cite{GtHJR24b} to determine further properties of the operator $T_\Om$.

As noted in Proposition 2.1 of \cite{GtHJR21}, the space of vector polynomials of size $m$ is contained in the domain of $T_\Om$, and so $T_\Om$ generates a matrix representation with respect to the standard basis in $H^p_m$, which, analogously to the scalar case (see Theorem 1.3 in \cite{GtHJR18}), has a Toeplitz structure with certain conditions on the growth of the coefficients. Using state space realization techniques, in Section \ref{S:ToepMat} we show that conversely a semi-infinite Toeplitz matrix satisfying these conditions necessarily leads to a matrix symbol of the type we consider. We also discuss this Toeplitz matrix structure in the context of the Sarason class of unbounded Toeplitz operators, as proposed in the last section of \cite{S08} and further investigated in \cite{R13,R16}.

In Section \ref{S:SpecAnalysis} we make use of the characterization of Fredholmness and invertibility of $T_\Om$ together with state space techniques to compute the essential spectrum and resolvent set of the operator $T_\Om$. Furthermore, we express these subsets of $\BC$ in terms of the matrices of the realization \eqref{Omreal} and stabilizing solutions to  associated Riccati equations. In the case of continuous rational matrix functions on $\BT$, so that $T_\Om$ is bounded, using a different type of realization Fredholmness and invertibility of $T_\Om$ have been studied in \cite[Chapter XXIV]{GGK93}.

Some examples that illustrate these results are given in Section \ref{S:Example} to conclude the paper. In particular, in one of the examples the essential spectrum turns out to be the full complex plane. This is in contrast to what happens for scalar symbols, in which case the essential spectrum is an algebraic curve in the complex plane.

\section{Semi-infinite Toeplitz matrices}\label{S:ToepMat}

Since the space $\cP^m$ of vector polynomials of size $m$ is contained in the domain of $T_\Om$, for any $m \times m$ rational matrix function $\Om(z)$, $T_\Om$ has a block matrix representation $T_\Om=[t_{i,j}]_{i,j=0}^\infty$ with $t_{i,j}\in \BC^{m \times m}$, with respect to the standard block basis of $H^p$, and this matrix representation has a Toeplitz structure, that is $t_{i,j}=a_{i-j}$ for all $i,j$. The latter follows from the fact that $T_{z^{-1}I}T_\Om T_{z I}f = T_\Om f$ for $f\in\Dom(T_\Om)$.

Conversely, let $[T]=\begin{bmatrix} a_{i-j}\end{bmatrix}_{i,j=0}^\infty$ be a semi-infinite block Toeplitz matrix, where each $a_j$ is an $m\times m$ matrix. In this section we discuss the question:\ Under which conditions on the entries $a_j$ does this semi-infinite block Toeplitz matrix correspond to a block Toeplitz matrix generated by an operator $T_\Om$ as defined above? The results in this section also hold for the case where the entries are non-square matrices; these results can be obtained simply by adding zero-rows or zero-columns to make the matrices square. We consider the following conditions:
%Motivated by Theorem 1.3 in \cite{GtHJR18} we consider the following conditions:
%\footnote{StH: The first condition indeed corresponds to what is in \cite{GtHJR18}, but without rationality it would be more logical to replace it with the assumption that the sequence corresponds to the Taylor coefficients of a function in $H^p_{m \times m}$. In the rational cases, these things all come down to the same thing.}
\begin{equation}\label{eq:conditions}
\begin{aligned}
  \textup{(i)} &\ \  \mbox{$\|a_0\|, \|a_1\|, \ldots$ are the Taylor coefficients of a function in $H^p$; and} \\
  \textup{(ii)} &\ \  \mbox{$\|a_{-j}\|\leq K \binom{j}{M-1}$},\ j\geq 1\mbox{ for some } M\in\mathbb{N},\, K>0.
\end{aligned}
\end{equation}
%\begin{equation}\label{eq:conditions2}
%(\|a_1\|, \|a_2\|, \ldots )\in \ell^2 \textrm{ and } \|a_{-j}\|=O(j^{M-1}), j\geq 1\textrm{ for some } M\in\mathbb{N}.
%\end{equation}
These conditions are not enough to get rationality of the symbol. To achieve that, we shall make use of the theory of state space realizations from systems theory, as outlined in, e.g., \cite[Chapter 3]{HRvS21}. Form the block Hankel matrices
\[
H^+_k =\begin{bmatrix} 
a_1 & a_ 2 & \cdots & a_k \\ a_2 & a_3 & \cdots & a_{k+1} \\
\vdots & \vdots & & \vdots \\
a_k & a_{k+1} & \cdots & a_{2k-1}
\end{bmatrix},
\qquad
H^-_k =\begin{bmatrix} 
a_{-1} & a_ {-2} & \cdots & a_{-k} \\ a_{-2} & a_{-3} & \cdots & a_{-k-1} \\
\vdots & \vdots & & \vdots \\
a_{-k} & a_{-k-1} & \cdots & a_{-2k+1}
\end{bmatrix}.
\]
We shall also impose the condition
\begin{equation}\label{eq:rankHankels}
\max_{k\geq 1} {\rm rank\,} H^+_k =: n_+ <\infty, \qquad \max_{k\geq 1} {\rm rank\,} H^-_k =: n_-
<\infty.
\end{equation}

The following theorem is the main result of this section.

\begin{ourtheorem}\label{T:ToepMat}
A semi-infinite block Toeplitz matrix $T=\begin{bmatrix} a_{i-j}\end{bmatrix}_{i,j=0}^\infty$ with each $a_j$ an $m\times m$ matrix, is the block Toeplitz representation of an operator $T_\Omega$ for a rational $m\times m$ matrix function $\Omega(z)$, possibly with poles on $\BT$, if and only if the conditions \eqref{eq:conditions} and \eqref{eq:rankHankels} hold.
\end{ourtheorem}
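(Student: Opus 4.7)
The plan is to treat the two directions separately, with the forward direction essentially a computation from the realization and the converse based on realization-theoretic recovery of a symbol from its moment sequence, combined with spectral location arguments. Assume first that $T=[a_{i-j}]$ is the matrix representation of $T_\Omega$ for some rational $\Omega$ given by the realization \eqref{Omreal}. Expanding on the annulus $r(\alpha)<|z|<1/r(A)$ (taking $r(A)$ and $r(\alpha)$ to be the spectral radii) yields the moments $a_0=R_0$, $a_j=CA^{j-1}B$ for $j\geq 1$, and $a_{-j}=\gamma\alpha^{j-1}\beta$ for $j\geq 1$. Stability of $A$ then gives geometric decay $\|a_j\|\leq K\,r(A)^{j-1}$, so $\sum_{j\geq 0}\|a_j\|z^j\in H^\infty\subset H^p$, proving condition \eqref{eq:conditions}(i). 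Semi-stability of $\alpha$ together with its Jordan structure gives $\|\alpha^{j-1}\|=O(j^{M-1})$, where $M$ is the largest Jordan block size of $\alpha$ at its unimodular eigenvalues, proving (ii). The block factorizations
\[
H^+_k = \mathrm{Obs}_k(C,A)\,\mathrm{Ctrb}_k(A,B),\qquad H^-_k = \mathrm{Obs}_k(\gamma,\alpha)\,\mathrm{Ctrb}_k(\alpha,\beta),
\]
then give $\mathrm{rank}\,H^+_k\leq n_+:=\dim A$ and $\mathrm{rank}\,H^-_k\leq n_-:=\dim\alpha$, yielding \eqref{eq:rankHankels}.

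For the converse, I would apply the Kalman--Kronecker realization theorem (cf.\ \cite[Chapter 3]{HRvS21}) to the finite-rank Hankels of \eqref{eq:rankHankels} to obtain minimal triples $(C,A,B)$ of size $n_+$ and $(\gamma,\alpha,\beta)$ of size $n_-$ satisfying $a_j=CA^{j-1}B$ for $j\geq 1$ and $a_{-j}=\gamma\alpha^{j-1}\beta$ for $j\geq 1$. The crux is to deduce the required spectral locations: condition (i) forces $\sigma(A)\subset\BD$, and condition (ii) forces $\sigma(\alpha)\subset\overline{\BD}$. The second is a spectral-radius estimate: any eigenvalue of $\alpha$ outside $\overline{\BD}$ would produce a geometrically growing component in $\|a_{-j}\|$, contradicting the polynomial bound in (ii). For the first, minimality of $(C,A,B)$ forces any eigenvalue $\lambda$ of $A$ with $|\lambda|\geq 1$ to contribute a non-vanishing (or geometrically growing) term to $\|a_j\|$; this contradicts (i), since Taylor coefficients of an $H^p$ function must tend to zero and the generating series $\sum\|a_j\|z^j$ must be analytic on all of $\BD$. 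With these spectral conclusions in hand, I would set
\[
\Omega(z):=a_0+zC(I-zA)^{-1}B+\gamma(zI-\alpha)^{-1}\beta,
\]
which is a rational matrix function of the form \eqref{Omreal}.

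To close the argument I would verify that the so-constructed $T_\Omega$ has matrix representation $T$. Given a standard block-basis element $z^k e$ with $e\in\BC^m$, $k\geq 0$, I would decompose $\Omega(z)z^k e=h(z)+r(z)$ where $r\in\Rat_0^m(\BT)$ consists of the principal parts of $\Omega(z)z^k e$ at its poles on $\BT$ (arising only from the summand of $\gamma(zI-\alpha)^{-1}\beta$ corresponding to the unimodular eigenvalues of $\alpha$), while $h\in L^p_m(\BT)$ absorbs the remaining pieces: the $H^p$ contribution from $z^k(a_0+zC(I-zA)^{-1}B)e$, the rational piece with poles strictly inside $\BD$ (which is smooth on $\BT$), and the polynomial remainder extracted when making the poles-on-$\BT$ part strictly proper. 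Applying the Riesz projection to $h$ and reading off Fourier/Taylor coefficients then gives $T_\Omega(z^k e)=\sum_{\ell\geq 0}a_{\ell-k}\,e\,z^\ell$, so the $(\ell,k)$ block entry of the matrix representation is $a_{\ell-k}$, matching $T$. The main obstacle I anticipate is the spectral argument in the converse, particularly the stability of $A$, since condition (i) controls only the scalar norms $\|a_j\|$ and one must use minimality to transfer the asymptotics of $\|CA^{j-1}B\|$ back to spectral information on $A$; by contrast, the polynomial growth bound in (ii) yields semi-stability of $\alpha$ rather directly.
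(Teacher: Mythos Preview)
Your proposal is correct and follows essentially the same route as the paper: realization algorithm to produce the minimal triples, growth conditions \eqref{eq:conditions} forcing the spectral locations of $A$ and $\alpha$, Jordan-form and Hankel-factorization bounds for the forward direction, and an explicit computation of $T_\Omega$ on monomials $z^k e$ to verify the matrix representation. The only cosmetic difference is in the stability argument for $A$: the paper phrases it as ``condition (i) forces $\Omega_+(z)=zC(I-zA)^{-1}B$ into $H^p_{m\times m}$, hence no poles in $\overline{\BD}$, hence $A$ stable by minimality'' (citing \cite[Corollary~8.14]{BGKR08}), whereas you argue the contrapositive directly from coefficient asymptotics and Riemann--Lebesgue---the two are the same idea, and your anticipation that this is the delicate step is accurate.
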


\begin{proof}[\bf Proof]
%\proof
Assume that the conditions \eqref{eq:conditions} and \eqref{eq:rankHankels} hold. Then by the realization algorithm (cf., Section 3.4 in \cite{HRvS21}) there is a minimal triple of matrices $(A,B,C)$ such that $a_j=CA^{j-1}B$ for  $j\geq 1$, and there is a minimal triple of matrices $(\alpha,\beta,\gamma)$ such that $a_{-j}=\gamma\alpha^{j-1}\beta$ for $j\geq 1$. Then for $|z|$ small enough we have
\[
\Om_+(z):=\sum_{j=1}^\infty a_j z^j= zC(I-zA)^{-1}B,
\]
while for $|z|$ large enough we have
\[
\Om_-(z):=\sum_{j=1}^\infty a_{-j}z^{-j} =\gamma(z I-\alpha)^{-1}\beta.
\]

By condition (i) in \eqref{eq:conditions} it follows that each entry in the matrix function $\Om_+(z)$ is in $H^p$, and hence $\Om_+(z)\in H^p_{m\times m}$, which implies that $\Om_+(z)$ has no poles in the closed unit disc $\overline{\BD}$. Then the minimality of the triple $(A,B,C)$ implies that $A$ has all its eigenvalues inside $\BD$, and thus $A$ is stable (cf., Corollary 8.14 in \cite{BGKR08}). 

According to (ii) in \eqref{eq:conditions}, there exist $M\in\BN$ and $K>0$ such that  $\|a_{-j}\|\leq K \binom{j}{M-1}$ for all $j\geq 0$. This implies that for each $r>1$ the function $\Om_-(z)$ is analytic on $|z| >r$, since $\sum_{j=1}^\infty K \binom{j}{M-1} r^{-j} $ is a convergent series (for instance by the ratio test). Hence $\Om_-(z)$ is analytic on $\BC \setminus \overline{\BD}$, so that $\al$ has al its eigenvalues inside $\overline{\BD}$ and hence is semi-stable, now using minimality of $(\al,\be,\ga)$ again in combination with  Corollary 8.14 in \cite{BGKR08}.

Thus, the function 
\[
\Omega(z)=a_0+zC(I-zA)^{-1}B+\gamma(z I-\alpha)^{-1}\beta
\] 
is a rational matrix function which possibly has poles on the unit circle, and for this $\Omega(z)$ we claim that $T$ coincides with the Toeplitz block matrix representation of $T_\Om$. To see this, we consider the expansion of $\Om(z)z^n$ for $n=0,1,\ldots$, as in the proof of Lemma 4.1 of \cite{GtHJR24b}. By successively using the identity
\[
(zI-\al)^{-1}=z^{-1}I+z^{-1}\al(zI-\al)^{-1},
\]
it follows that for each $w\in\BC^m$ we have 
\begin{align*}
\Om(z)z^n w &= z^{n+1}C(I-zA)^{-1}Bw + a_0 z^nw + \sum_{k=0}^{n-1}z^{n-k-1}\ga \al^k\be w   + \ga \al^n (zI-\al)^{-1}\be w \\
 &= \sum_{j=0}^\infty z^{n+1+j} CA^jBw + a_0 z^nw + \sum_{k=0}^{n-1}z^{n-k-1}a_{-k-1}w  + \ga \al^n (zI-\al)^{-1}\be w\\
 &= z^n\left(\sum_{j=0}^\infty z^{j+1} a_{j+1} + a_0  + \sum_{l=-n}^{-1}z^{l}a_{l}\right)w  + \ga \al^n (zI-\al)^{-1}\be w\\
  &= \sum_{j=0}^\infty a_{j-n} w z^j  + \ga \al^n (zI-\al)^{-1}\be w.
\end{align*}
Due to the eigenvalues of $\al$ being in $\ov{\BD}$, it follows that $\ga \al^n (zI-\al)^{-1}\be w$ can be written as a function in $\Rat_0^{m}(\BT)$ and an anti-analytic part in $L^p_{m}(\BT)$, showing that the $T_\Om z^n w$ is equal to $\sum_{j=0}^\infty a_{j-n} w z^j$, which confirms the claim regarding the Toeplitz block matrix representation of $T_\Om$.

For the converse implication, assume $\Om(z)$ is an $m \times m$ rational matrix function, which may have poles on the unit circle. An argument similar to the analogous result for the scalar case in \cite[Theorem 1.3]{GtHJR18} works here as well, but we give an alternative argument using state space realizations. Note that $\Om(z)$ admits a minimal state space realization as in \eqref{Omreal} with $A$ stable and $\al$ semi-stable. In the previous paragraph we showed that $T_\Om$ has a block Toeplitz matrix representation $[a_{i-j}]_{i,j=0}^{\infty}$ with $a_0=R_0$, $a_j=CA^{j-1}B$, for $j\geq 1$, and $a_{j}=\ga \al^{-j-1}\be$ for $j\leq -1$. Then $H_k^+$ and $H_k^-$ are the $k \times k$ block Hankel matrices associated with the rational matrix functions $\Om_+(z)$ and $\Om_-(z)$, as defined above, respectively, and hence \eqref{eq:rankHankels} holds by the realization algorithm \cite[Section 3.4]{HRvS21}. Since $A$ is stable, it is clear that 
\[
\sum_{j=1}^\infty z^j a_j= \sum_{j=1}^\infty z^j CA^{j-1}B=zC(I-zA)^{-1}B\ \mbox{is in $H^p_{m \times m}$}.
\]
Hence item (i) in \eqref{eq:conditions} holds. 

Since $a_{-j}=\gamma \alpha^{j-1} \beta$ for $j\in\BN$, we have $\|a_{-j}\|\leq \|\ga\| \|\be\| \|\al^{j-1}\|$.
To get an upper bound on the growth of $\|\al^{j-1}\|$ let $\alpha=SJS^{-1}$ where $J$ is in Jordan normal form. Then $\|\alpha^{j-1}\| \leq \|S\|\cdot \|S^{-1}\| \|J^{j-1}\|$. Let $M$ be the size of the largest Jordan block in $J$ corresponding to an eigenvalue $\lambda_0$ of $\alpha$ on the unit circle, and let $\lambda_0 I+N$ be that Jordan block. Then  $\|J^{j-1}\| \leq \|(\lambda_0 I +N)^{j-1}\|$. Moreover, using the fact that the norm of an $M\times M$ matrix is bounded by $M$ times the maximum absolute value of the entries in the matrix, we see that for some constant $K>0$ we have
\[
\|J^{j-1}\| \leq K \cdot \begin{pmatrix} j \\ M-1 \end{pmatrix},
\]
as the largest absolute value of an entry in $(\lambda_0 I+N)^{j-1}$ occurs in the right upper corner of the matrix. This proves the theorem.
% which may increase the constant $K$, and need only consider the Jordan blocks with eigenvalue on $\BT$, and of those it is only the size of the largest Jordan block, denoted by $M$ matters. Hence, we need to find an upper bound on $\|J_M(1)^j\|$, where $J_M(1)$ is the Jordan block of size $M$ with eigenvalue $1$. I DON'T THINK WE GET THIS BY TAKING THE LARGEST NUMBER, BUT PROBABLY THIS IS KNOWN. 
%
%ARGUMENT BY ANDRE, SHOULD STILL REWRITE:
%the $a_{-j}=\gamma \alpha^{j-1} \beta$. Now
%$\alpha$ is semi-stable. Considering the Jordan normal form of $\alpha$,
%it is easily seen that $\| a_{-j}\| \leq K\cdot  j^{M-1}$ where $M$ is the size
%of the largest Jordan block of $\alpha$ with an eigenvalue on the unit circle,
%and $K$ is some constant (that can be seen by considering the powers
%of $I +N$ where $N$ is an $M\times M$ upper triangular with ones on the first
%diagonal above the main diagonal. I think that $j^{M-1}$ is a gross overestimate
%for M large and j large, but certainly an upper bound that will work.
%
%SECOND ARGUMENT WITH UPDATED BOUND.
%\[
%\| \alpha^{j-1}\| \leq K\cdot \begin{pmatrix} j \\ M-1 \end{pmatrix}.
%\]
%
%The number in the right upper corner of $(I+N)^{j-1}$ is $j$ choose $M-1$
%when $N$ is $M \times M$. To give an idea, in the jpeg file it is done for $M$ is 6
%and $j$=1 to 30: the norms of $(I+N)^{j-1}$ in blue, in red $j$ choose $M-1$.
%Looks like same growth to me.
%And this can obviously be bounded by $j^{M-1}$ times a constant.
%\qed
\end{proof}

While the theorem only claims existence of an $m \times m$ rational matrix function $\Om(z)$ such that the given semi-infinite block Toeplitz $T$ corresponds to the block Toeplitz representation of $T_\Om$, the realization algorithm of Section 3.4 in \cite{HRvS21} used in the proof actually gives a construction of the matrices $A,B,C$ and $\al,\be,\ga$ that together with $R_0:=a_0$ form a minimal state space realization of $\Om(z)$ once it is established that \eqref{eq:rankHankels} holds.  

\begin{ourremark}
We connect the result of Theorem \ref{T:ToepMat} with the {\em Sarason class of unbounded Toeplitz operators}, as proposed by Sarason in the last section of \cite{S08}, and investigated further by Rosenfeld in \cite{R13,R16} (who referred to such operators as Sarason-Toeplitz operators). The idea stems from the observation that a bounded operator $T$ on $H^2$ is a Toeplitz operator with symbol from $L^\infty$ if and only if $ S^* T S=T$, with $S=T_z$ the forward shift operator on $H^2$, i.e., the Toeplitz operator with symbol $z\mapsto z$. To remain in the context of the present paper, we extend the definition of \cite{R13,R16}, and say that an operator $T(H^p_m\to H^p_m)$ is in the  {\em Sarason class of unbounded Toeplitz operators} (abbreviated to {\em Sarason class} in the sequel) whenever     
\begin{itemize}
 \item[(i)] $T$ is closed and densely defined;
  
\item[(ii)] $\Dom(T)$ is $T_{zI_m}$-invariant;
  
\item[(iii)] $T_{z^{-1}I_m} T T_{zI_m}= T$; 
 
\item[(iv)] If $f\in \Dom(T)$ and $f(0)=0$, then $T_{z^{-1}I_m} f\in \Dom(T)$.  
\end{itemize}  
Here $T_{zI_m}$ and $T_{z^{-1}I_m}$ are the bounded Toeplitz operators with symbols $z\mapsto zI_m$ and $z\mapsto z^{-1}I_m$. For the case $m=1$ and $p=2$ it was shown in \cite{GtHJR19b} that the unbounded Toeplitz operators considered in this paper fall within the Sarason class, and it is not difficult to see that this extends to $1<p<\infty$ (as already noted in \cite{GtHJR19b}) and to arbitrary $m\in\BN$ (for items (i)--(iii) see Proposition 2.1 in \cite{GtHJR21}).

Let us consider an operator $T$ in the Sarason class with the property that the constant functions are in $\Dom(T)$. Since $\Dom(T)$ is $T_{zI_m}$-invariant by (ii), it follows that the vector polynomials of size $m$ are in $\Dom(T)$. Hence $T$ admits a block matrix representation with respect to the standard block basis of $H_m^p$, and it is easy to see that (iii) implies that the block matrix representation has Toeplitz structure, i.e., $[T]=[a_{i-j}]_{i,j=0}^\infty$ for matrices $a_k\in\BC^{m \times m}$. 

In \cite{GtHJR19b} we show that the operator in the Sarason class is not uniquely determined by the symbol, since for a scalar-valued rational function $\omega$ the adjoint $T_{\om}^*$ of the unbounded Toeplitz operator $T_\omega$
is a restriction of the unbounded Toeplitz operator $T_{\omega^*}$, and thus the symbol $\om^*$ defines two different operators in the Sarason class, with the same symbol.
% (assuming the Sarason sub-symbols are indeed what we thing that the symbols should be). 

In short, for an operator in the Sarason class:
%HERE IS WHAT WE CAN PROBABLY EXPECT, AND SOME THINGS TO INVESTIGATE FURTHER:
\begin{itemize}
  \item[(i)] We get a matrix representation with respect to the standard basis, and this matrix representation has a Toeplitz structure.  
  
  \item[(ii)] The Toeplitz matrix structure generates an abstract Fourier representation, and the fact that $\cP^m\subset \Dom(T)$ implies that the 'analytic part' must be in $H^p_m$. Apart from that, we are not sure if there are any conditions on the matrix entries that can be obtained.
 \end{itemize}
From that perspective, we can state the following \emph{open problem}.  Start with a Toeplitz matrix and assume the 'analytic part' is in $H^p_m$. Can we then define an unbounded Toeplitz operator from that? It is clear how $T$ acts on the vector polynomials of size $m$. Is there maybe a unique way to extend the operator on $\cP^m$ to one that satisfies (i)--(iv)? Even without uniqueness this would be a relevant result.        
\end{ourremark}

\section{Spectral analysis of $T_\Omega$}\label{S:SpecAnalysis}

In this section we use state space methods to determine some properties of the spectrum of $T_\Om$. In the case that $\Om(z)$ is a scalar function, the spectrum was described in detail in \cite{GtHJR19a}. For matrix-valued symbols the situation is much more complicated, as mentioned in the introduction. In our results, the starting point is that $\Om(z)$ is given by a state space realization of the form \eqref{Omreal}
with $A$ stable and $\alpha$ semi-stable. Throughout this section, for a square matrix $M$ we write $\si(M)$ for the spectrum of $M$, i.e., the set of its eigenvalues, and $\rho(M):=\BC \backslash \si(M)$ for the resolvent set of $M$.

\subsection{The essential spectrum}

Recall that the essential spectrum of $T_\Om$ is defined by 
\[
\si_{\textup{ess}}(T_\Om)=\left\{\la\in\BC \colon T_\Om -\la I_{H^p} \mbox{ is not Fredholm} \right\}.
\]
We can write $T_\Om -\la I_{H^p}=T_{\Om -\la I_m}$ and it then follows from Theorem 1.3 in \cite{GtHJR21} that $\la$ is in $\si_{\textup{ess}}(T_\Om)$ precisely when $\Om(z) -\la I_m$ has a zero on $\BT$, that is, $(\Om(z) -\la I_m)^{-1}$ has a pole on $\BT$, provided that $\det(\Om(z) -\la I_m)$ is not uniformly zero (in $z$ for the fixed choice of $\lambda$). In particular, the essential spectrum of $T_\Om$ does not depend on $p$. A complication that occurs in the case of matrix symbols with poles on $\BT$ is that the symbol can have a pole and a zero at the same point of $\BT$, so that the zeroes of $\Om(z) -\la I_m$ on $\BT$ do not necessarily correspond to the zeroes of $\det(\Om(z) -\la I_m)$ on $\BT$. 

\begin{ourtheorem}\label{thm:essspec}
Let $\Om(z)$ be a rational $m\times m$ matrix function given by a minimal state space realization \eqref{Omreal} with $A$ stable and $\al$ semi-stable.
Define the linear pencil
\[
L(\la,z):=\mat{zA- I&0&B\\
0 & \al-  z I&\be\\ 
zC&\ga&R_0-\la I},\quad \la, z\in\BC.
\]
Then we have the following two descriptions of $\sigma_{\textup{ess}}(T_\Om)$:
\[
\si_{\textup{ess}}(T_\Om)= \{\la\in\BC \colon \det L(\la,\nu)=0 \mbox{ for some } \nu\in \BT\},
\]
and
\begin{align*}
\si_{\textup{ess}}(T_\Om)&= \{\la\in\BC \colon \lambda \in \sigma(\Omega(\nu)) \mbox{ for some }\nu\in \BT\setminus\sigma(\alpha)\}\cup
\\
&\qquad\qquad \cup 
\{\lambda\in\BC \colon  \det L(\la,\nu)=0 \mbox{ for some } \nu\in\mathbb{T}\cap \sigma(\alpha)\}.
\end{align*}
\end{ourtheorem}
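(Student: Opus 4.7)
The plan is to start from the characterization recalled immediately before the statement---namely that $\la\in\sigma_{\textup{ess}}(T_\Om)$ iff $\Om(z)-\la I_m$ has a zero on $\BT$ in the sense that $(\Om(z)-\la I_m)^{-1}$ has a pole on $\BT$ (Theorem 1.3 of \cite{GtHJR21})---and then connect such zeros to the vanishing of $\det L(\la,\nu)$ via an explicit Schur-complement identity. Because $A$ is stable, $\nu A - I$ is invertible for every $\nu \in \BT$ and in a neighborhood of $\BT$, so eliminating the first block row and column of $L(\la,z)$ by Schur complement and then a second Schur step with pivot $\alpha - zI$ (valid for $z \notin \sigma(\alpha)$) yields the rational identity
\[
\det L(\la,z) = \det(zA - I)\,\det(\alpha - zI)\,\det(\Om(z) - \la I_m),
\]
which extends to all of $\BC$. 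For $\nu \in \BT \setminus \sigma(\alpha)$ the first two factors are non-zero, so $\det L(\la,\nu)=0$ iff $\det(\Om(\nu)-\la I_m)=0$ iff $\la \in \sigma(\Om(\nu))$; moreover $\Om(\nu)$ is defined there, so $(\Om-\la I_m)^{-1}$ fails to be holomorphic at $\nu$ exactly when $\Om(\nu)-\la I_m$ is singular. This already accounts for the $\BT \setminus \sigma(\alpha)$ portion of both descriptions.

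For $\nu \in \BT \cap \sigma(\alpha)$, where $\Om(\nu)$ is undefined, I would argue via a local Smith-McMillan multiplicity count. Let $p_\nu$ denote the pole multiplicity and $z_\nu$ the zero multiplicity of $\Om(z) - \la I_m$ at $\nu$, so that the rational function $\det(\Om(z) - \la I_m)$ has order $z_\nu - p_\nu$ at $\nu$ and, by definition, $\nu$ is a zero of $\Om - \la I_m$ precisely when $z_\nu > 0$. By minimality of the realization \eqref{Omreal}---specifically the standard fact (see, e.g., Chapter~8 of \cite{BGKR08}) that the pole structure of the transfer function on $\ov{\BD}$ matches the Jordan structure of $\alpha$---the pole multiplicity $p_\nu$ equals the algebraic multiplicity $a_\nu$ of $\nu$ as an eigenvalue of $\alpha$. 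Taking orders at $\nu$ on the two sides of the identity above gives
\[
\operatorname{ord}_\nu \det L(\la,z) = 0 + a_\nu + (z_\nu - p_\nu) = z_\nu,
\]
so $\det L(\la,\nu) = 0$ iff $z_\nu > 0$ iff $\nu$ is a zero of $\Om - \la I_m$. The same count is trivially valid at $\nu \in \BT \setminus \sigma(\alpha)$ with $a_\nu = p_\nu = 0$, giving a uniform equivalence over all of $\BT$.

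Assembling the pieces: $\la \in \sigma_{\textup{ess}}(T_\Om)$ iff $\Om - \la I_m$ has a zero on $\BT$ iff there is $\nu \in \BT$ with $\det L(\la,\nu) = 0$, which is the first description; the second follows immediately by partitioning $\BT$ into $\BT \setminus \sigma(\alpha)$ and $\BT \cap \sigma(\alpha)$ and using on the first piece the equivalence $\det L(\la,\nu)=0 \Leftrightarrow \la \in \sigma(\Om(\nu))$ from the first paragraph. A brief separate remark will handle the degenerate case $\det(\Om - \la I_m) \equiv 0$: the identity then forces $\det L(\la,z)\equiv 0$ and a rational null-vector of $\Om-\la I_m$ yields non-Fredholmness of $T_{\Om-\la I_m}$ directly, so both sides of each description contain such $\la$. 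The main obstacle I anticipate is the multiplicity count at $\nu \in \BT\cap\sigma(\alpha)$: the pole and zero structure of a matrix-valued rational symbol can interact in non-trivial ways, and what rescues the argument is precisely the cancellation $a_\nu - p_\nu = 0$ enforced by the minimality of $(\gamma,\alpha,\beta)$, which is exactly what makes the extra factor $\det(\alpha-zI)$ in the identity compensate the pole part of $\det(\Om(z)-\la I_m)$ and leave behind only the zero multiplicity.
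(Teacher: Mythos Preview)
Your argument is correct and reaches the same conclusion as the paper, but by a genuinely different route. The paper, after observing that $\Omega(z)-\lambda I$ is the Schur complement of $L(\lambda,z)$ with respect to the block $\widehat A(z)=\sbm{zA-I&0\\0&\alpha-zI}$, writes $(\Omega(z)-\lambda_0 I)^{-1}=E\,L(\lambda_0,z)^{-1}E^*$ and then invokes Rosenbrock's theorem on system matrices: it checks that $L(\lambda_0,z)$ is right coprime with $E^*$ and left coprime with $E$ (using controllability/observability of $(A,B),(C,A),(\alpha,\beta),(\gamma,\alpha)$ together with stability of $A$ and semi-stability of $\alpha$), and concludes that $L(\lambda_0,z)^{-1}$ and $(\Omega(z)-\lambda_0 I)^{-1}$ have the same poles. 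Since $L(\lambda_0,z)$ is a matrix polynomial in $z$, those poles are exactly the zeros of $\det L(\lambda_0,\cdot)$.

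Your approach bypasses Rosenbrock entirely: from the scalar identity $\det L(\lambda,z)=\det(zA-I)\det(\alpha-zI)\det(\Omega(z)-\lambda I_m)$ you do a local order count at each $\nu\in\BT$, using that minimality of $(\gamma,\alpha,\beta)$ forces the total pole multiplicity $p_\nu$ of $\Omega-\lambda I$ at $\nu$ to equal the algebraic multiplicity $a_\nu$ of $\nu$ in $\sigma(\alpha)$, so that $\operatorname{ord}_\nu\det L(\lambda,\cdot)=a_\nu+(z_\nu-p_\nu)=z_\nu$. This is more elementary in that it needs only the determinant formula and the standard pole--Jordan correspondence for minimal realizations (e.g., \cite[Chapter~8]{BGKR08}), whereas the paper's route packages the same minimality information into the coprimeness hypotheses of Rosenbrock's theorem. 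The trade-off is that the paper's argument gives the stronger structural statement that the \emph{full pole data} (not just total multiplicities) of $L(\lambda_0,\cdot)^{-1}$ and $(\Omega-\lambda_0 I)^{-1}$ agree, while your determinant count only tracks the scalar order---which is, however, all that is needed for the theorem as stated. Your handling of the degenerate set $E(\Omega)$ matches the paper's Lemma~\ref{L:EOm}.
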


First we deal with $\la\in\BC$ for which $\det(\Om(z) -\la I_m)$ is uniformly zero, i.e.,  $\la$ in the set
\begin{equation}\label{EOm}
E(\Omega)=\{ \lambda\in\BC \colon \det(\Om(z)-\lambda I_m)=0 \mbox{ for all } z\in\mathbb{C} \mbox{ that are not poles of } \Om(z) \}.
\end{equation}
The following example shows that this set can be nonempty.

\begin{ourexample}\label{Ex1}
Consider 
\[
\Om(z)=\begin{bmatrix} 2 & \tfrac{1}{z-1} \\ 0 & 2\end{bmatrix},\quad z\in\BD,
\] 
for which $\Om(z)-\la I_2$ has both a pole and zero at $z=1$, for each $\la\in\BC$. However, we have that $\det(\Om(z) -\la I_2)=(2-\la)^2$ is constant in $z$, and in particular, $\det(\Om(z) -2 I)=0$ for all $z$. Clearly, this only happens for $\la=2$. Hence $E(\Om)=\{2\}$.
\end{ourexample}

\begin{ourlemma}\label{L:EOm}
For any $m \times m$ rational matrix function $\Om(z)$ the set $E(\Om)$ consists of at most $m$ points and 
\[
E(\Om)\subset \si_p(T_\Om)\cap \si_{\textup{ess}}(T_\Om).
\]
\end{ourlemma}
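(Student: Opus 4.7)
The plan is to handle the cardinality bound and the two inclusions separately, and in both inclusions the key is to produce an explicit polynomial vector in $\ker(\Omega(z)-\lambda I_m)$ viewed as a matrix over the field $\BC(z)$ of rational functions.

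For the cardinality bound, I would fix any $z_0\in\BC$ that is not a pole of $\Om$ and observe that $\lambda \mapsto \det(\Om(z_0)-\lambda I_m)$ is a polynomial of degree $m$ in $\lambda$ (with leading term $(-\lambda)^m$). Every $\lambda\in E(\Om)$ must be a root of this polynomial, so $|E(\Om)|\le m$.

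Next, fix $\lambda\in E(\Om)$. Since $\det(\Om(z)-\lambda I_m)\equiv 0$ as a rational function, the matrix $\Om(z)-\lambda I_m$ is singular over the field $\BC(z)$, so there exists a nonzero $v\in\BC(z)^m$ with $(\Om(z)-\lambda I_m)v(z)=0$. Clearing denominators, I may assume $v$ is a nonzero polynomial vector, hence $v\in\cP^m\subset\Dom(T_\Om)$. Now $\Om(z)v(z)=\lambda v(z)$, which is a polynomial and therefore lies in $H^p_m$ with no rational part; in the decomposition $\Om v=h+r$ from the definition of $\Dom(T_\Om)$ we have $h=\lambda v$ and $r=0$, and consequently $T_\Om v=\BP(\lambda v)=\lambda v$. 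This shows $\lambda\in\si_p(T_\Om)$.

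For the essential spectrum inclusion, I will use the same $v$ to build an infinite-dimensional kernel for $T_\Om-\lambda I$. For every $n\ge 0$ the vector $z^n v\in\cP^m\subset\Dom(T_\Om)$ satisfies $\Om(z)\,z^n v(z)=\lambda z^n v(z)$, so by the same argument $T_\Om(z^n v)=\lambda z^n v$. The vectors $\{z^n v\}_{n\ge 0}$ are linearly independent in $H^p_m$ (any nontrivial polynomial relation $\sum c_n z^n v(z)=0$ forces $\sum c_n z^n\equiv 0$ because $v$ is a nonzero polynomial vector), so $\dim\ker(T_\Om-\lambda I)=\infty$ and $T_\Om-\lambda I$ fails to be Fredholm, giving $\lambda\in\si_{\textup{ess}}(T_\Om)$.

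There is no real obstacle here; the only subtlety is justifying the passage from a generic rational kernel vector to a polynomial kernel vector and checking that, despite the poles of $\Om$, the product $\Om v$ is forced by the eigenvector relation to be a polynomial and thus lies in $\Dom(T_\Om)$ with trivial rational part. Once that is in place, both inclusions fall out of the same construction.
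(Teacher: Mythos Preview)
Your proof is correct and follows essentially the same approach as the paper's: bound $|E(\Om)|$ via the degree of the characteristic polynomial in $\lambda$, produce a polynomial eigenvector $v$ from the singularity of $\Om(z)-\lambda I_m$ over $\BC(z)$, and then observe that multiplying $v$ by scalars in $H^p$ (or just by $z^n$) yields an infinite-dimensional kernel. The only notable difference is that the paper invokes Forney's theorem on polynomial bases of rational kernels, whereas your ``clear denominators'' argument gets the needed polynomial vector more directly and avoids that citation.
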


\begin{proof}[\bf Proof]
Suppose that $\la\in E(\Om)$. Write $\Om(z)=\frac{1}{q(z)} P(z)$, where $q(z)$ is the least common multiple of all denominators of entries of $\Om(z)$ and $P(z)$ is an $n\times n$ matrix polynomial. Then 
\[
\det(\Om(z)-\la I_m)=\frac{1}{q(z)^m} \det (P(z)-\la q(z)I_m).
\]
This is zero for all $z$ if and only if $\det(P(z)-\la q(z) I_m)=0$ for all $z$. The latter expression is an $m$-th degree polynomial in $\la$ with coefficients which are polynomials in $z$. For $z$ not a zero of $q$ there are precisely $m$ solutions $\la$, counting multiplicities. This means that $E(\Om)$ consists of at most $m$ points. 

According to Forney \cite{F75}, when viewing $\Om(z)-\la I_m$ as a matrix over the field of rational functions in $z$, for $\la\in E(\Om)$ there is a basis for $\kernel(\Om(z)-\la I_m)$ consisting of vector polynomials. Now, if $(\Om(z)-\la I_m)r(z)$ is uniformly $0$ for a nontrivial vector polynomial $r(z)$,  then $r(z)\in \kernel (T_\Om-\la I)$, so that $\la\in\si_p(T_\Om)$. However, also $(\Om(z)-\la I_m)r(z)h(z)$ is uniformly zero for any $h\in H^p$. Hence $rH^p\in \kernel (T_\Om-\la I)$, so that $\kernel (T_\Om-\la I)$ is infinite dimensional and thus $\la\in\si_{\textup{ess}}(T_\Om)$. Hence $E(\Om)\subset \si_p(T_\Om)\cap \si_{\textup{ess}}(T_\Om)$.
\end{proof}

We now prove Theorem \ref{thm:essspec}.

\begin{proof}[\bf Proof of Theorem \ref{thm:essspec}]
Decompose $L(\la,z)$ as 
\[
L(\la,z)=\matr{cc|c}{zA- I&0&B\\
0 & \al-  z I&\be\\ 
\hline
zC&\ga&R_0-\la I}=\mat{\whatA(z)&\whatB\\\whatC(z)&\whatD_{\la}},\quad \la, z\in\BC.
\]
Note that $\whatA(z)$ is invertible if and only if $z$ is not a pole of $\Om(z)$, due to the minimality of the realization \eqref{Omreal}. In that case, we have that 
\begin{align*}
\Om(z)-\la I 
&= R_0 - \la I+\begin{bmatrix} zC &\gamma\end{bmatrix}
\begin{bmatrix} I-zA & 0 \\ 0 & zI-\alpha\end{bmatrix}^{-1}
\begin{bmatrix} B\\ \beta\end{bmatrix}\\
&= R_0 - \la I-\begin{bmatrix} zC &\gamma\end{bmatrix}
\begin{bmatrix} zA -I & 0 \\ 0 &\alpha- zI\end{bmatrix}^{-1}
\begin{bmatrix} B\\ \beta\end{bmatrix} 
= \whatD_{\la}-\whatC(z)\whatA(z)^{-1}\whatB.
\end{align*}
Hence $\Om(z)-\la I$ is the Schur complement of $L(\la,z)$ with respect to $\whatA(z)$; see, e.g., \cite[Section 2.2]{BGKR08}. It follows that, whenever $\det \whatA(z)\neq 0$, $\Om(z)-\la I$ is invertible if and only if $L(\la,z)$ is invertible.

Now take $\la_0\in E(\Om)$, with $E(\Om)$ as in \eqref{EOm}. Then for any $\nu\in\BT$ with $\det \whatA(\nu)\neq 0$, we have that $\det(\Om(\nu)-\la_0 I)=0$ (so that $\la_0\in\si(\Om(\nu))$) and hence $\det L(\la_0,\nu)=0$, while also $\la_0 \in \si_{\textup{ess}(T_{\Om})}$, by Lemma \ref{L:EOm}. So for $\la_0\in E(\Om)$, the statements hold.

From now on we assume $\la_0\notin E(\Om)$. Applying the standard Schur complement inversion formula \cite[Section 2.2]{BGKR08} to $L(\la_0,z)$  with respect to $\whatA(z)$, we have that
\begin{equation}\label{InvForm}
(\Om (z)- \lambda_0 I)^{-1} = E L(\lambda_0, z)^{-1} E^*, \mbox{ with $E=\mat{0&0&I}$},\quad \mbox{whenever } \det L(\la_0,z)\neq0.
\end{equation}

%\begin{equation}\label{InvForm}
%(\Om (z)- \lambda_0 I)^{-1} = \begin{bmatrix} 0 & 0 & I \end{bmatrix} L(\lambda_0, z)^{-1}
%\begin{bmatrix} 0 \\ 0 \\ I \end{bmatrix},\quad \mbox{whenever } \det L(\la_0,z)\neq0.
%\end{equation}

Note that the above formula for $(\Om (z)- \lambda_0 I)^{-1}$ extends to the poles of $\Om(z)$, as long as $L(\la_0,z)$ is invertible. Recall that the zeroes of $\Om (z)- \lambda_0 I$ are the poles of $(\Om (z)- \lambda_0 I)^{-1}$, and that $T_{\Om} -\la_0 I$ is not Fredholm if and only if $\Om (z)- \lambda_0 I$ has a zero on $\BT$, by Theorem 1.3 in \cite{GtHJR21}. Hence, to prove the first formula for $\si_{\textup{ess}}(T_\Om)$ it suffices to show that the poles of $(\Om (z)- \lambda_0 I)^{-1}$ coincide with the zeroes of $\det L(\la_0,z)$, because $L(\la_0,z)$ is a matrix polynomial in $z$.
Define 
\[
N(\la_0,z)=\mat{L(\la_0,z) & E^*\\ E &0}\mbox{ with $E$ as in \eqref{InvForm}}.
\]
By \eqref{InvForm}, $(\Om (z)- \lambda_0 I)^{-1}$ is the Schur complement of $N(\la_0,z)$ with respect to $L(\la_0,z)$. Because the pairs $(A,B)$ and $(\alpha,\beta)$ are controllable, $A$ is stable and $\alpha$ is semi-stable, the matrix
\[
\begin{bmatrix}
zA- I & 0 & B \\ 0 & z I -\alpha & \beta
\end{bmatrix}
\]
has full row rank for all $z\in\mathbb{C}$. This implies that $\mat{L(\la_0,z) & E^*}$ also has full row rank for all $z\in\mathbb{C}$. From this it follows that the Smith form of $\mat{L(\la_0,z) & E^*}$ is $\mat{I\  & 0}$, so that, by Proposition 2.1 in \cite{DNZ} the matrix polynomials $L(\la_0,z)$ and $E^*$ are right coprime. 
Because the pairs $(C,A)$ and $(\gamma, \alpha)$ are observable, $A$ is stable and $\alpha$ is semi-stable, it can be seen in a similar manner that $L(\lambda_0, z)$ and $E$ are left coprime. It then follows from a result of Rosenbrock \cite{R70} (see also Theorem 3.5 in \cite{DNZ} for a nice presentation and extension of this material), that $L(\lambda_0, z)^{-1}$ and $(\Om(z)-\lambda_0)^{-1}$ have the same poles, which, as was explained above, coincide with the zeroes of $\det L(\la_0,z)$. This completes the proof of the first description of the essential spectrum.

For the second description, we may still assume $\la_0\notin E(\Om)$. Let $\nu\in\BT\setminus \si(\al)$ By Schur's determinant formula, applied to $L(\la_0,\nu)$, we have 
\[
\det L(\la_0,\nu)=\det (\Om(\nu)-\lambda_0) \det \whatA(\nu)= \det (\Om(\nu)-\lambda_0I) \det(\nu A-I) \det (\nu I-\alpha).
\] 
Hence $\det L(\la_0,\nu)=0$ if and only if $\la_0$ is an eigenvalue of $\Om(\nu)$. That proves the second description of the essential spectrum.
\end{proof}

\subsection{The resolvent set}

Next, we consider the resolvent set. 
Using results from \cite{GtHJR24b}, see also \cite{FKR10}, we can characterize invertibiliy of $T_\Omega -\lambda I=T_{\Omega -\lambda I}$ in terms of existence of the stabilizing solution to an algebraic Riccati equation. That gives a result that makes it possible to determine whether or not $\lambda$ is in the resolvent set of $T_\Omega$. To be precise, we have the following proposition.

\begin{ourproposition}\label{prop:resolvent}
Let $\Omega$ be given by the realization \eqref{Omreal}. Then $\lambda\in\rho(T_\Omega)$ 
 if and only if there is a matrix $Q$, depending on $\lambda$, such that $R_0-\gamma QB-\lambda I$ is invertible, $Q$
satisfies the algebraic Riccati equation
\begin{equation}\label{eq:Ricc}
Q=\alpha QA+(\beta -\alpha QB)(R_0-\gamma QB-\lambda I)^{-1}(C-\gamma QA),
\end{equation}
and such that the matrices
\[
A_{\circ}:=A-B(R_0-\gamma QB-\lambda I)^{-1}(C-\gamma QA),\ \
\alpha_{\circ}:=\alpha -\left(\beta-\alpha QB\right)(R_0-\gamma QB-\lambda I)^{-1}\gamma
\]
are both stable.
\end{ourproposition}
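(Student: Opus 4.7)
The idea is that the proposition is essentially an application of the invertibility criterion for $T_\Omega$ developed in \cite{GtHJR24b} to the shifted operator $T_{\Omega-\lambda I_m}$. The plan has three steps.

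First I would observe that $T_\Omega - \lambda I_{H^p} = T_{\Omega - \lambda I_m}$ on $\Dom(T_\Omega)$, so that $\lambda \in \rho(T_\Omega)$ if and only if the Toeplitz-like operator $T_{\Omega-\lambda I_m}$ is boundedly invertible. Thus the task reduces to recognising when a Toeplitz-like operator with symbol $\Omega-\lambda I_m$ is invertible.

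Second, I would note that the realization \eqref{Omreal} of $\Omega$ yields
\[
\Omega(z)-\lambda I_m = (R_0-\lambda I)+zC(I-zA)^{-1}B+\gamma(zI-\alpha)^{-1}\beta,
\]
which is again a realization of the form \eqref{Omreal} with only the feedthrough matrix changed from $R_0$ to $R_0-\lambda I$; the matrices $A,B,C,\alpha,\beta,\gamma$, and in particular the minimality of the triples $(C,A,B)$ and $(\gamma,\alpha,\beta)$ together with stability of $A$ and semi-stability of $\alpha$, are unchanged. So the realization of $\Omega-\lambda I_m$ satisfies the standing assumptions needed to apply the invertibility criterion from \cite{GtHJR24b}.

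Third, I would invoke that invertibility criterion for $T_{\Omega-\lambda I_m}$: it asserts existence of a matrix $Q$ (depending on the feedthrough $R_0-\lambda I$) for which the relevant compatibility matrix is invertible, $Q$ solves the associated algebraic Riccati equation obtained by substituting $R_0-\lambda I$ for $R_0$, and the closed-loop matrices $A_\circ$ and $\alpha_\circ$ are stable. Substituting $R_0 \mapsto R_0-\lambda I$ in the Riccati equation of \cite{GtHJR24b} produces precisely \eqref{eq:Ricc}, and the definitions of $A_\circ$ and $\alpha_\circ$ given in the statement are obtained from the closed-loop formulas of \cite{GtHJR24b} by the same substitution. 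This yields both directions of the equivalence.

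The main obstacle is bookkeeping rather than conceptual: one must check that the invertibility theorem from \cite{GtHJR24b} was stated in a form that tolerates an arbitrary feedthrough matrix (not only the particular $R_0$ appearing in the original realization), and that the notion of stabilizing solution used there coincides with the one encoded in the stability requirement on the pair $(A_\circ,\alpha_\circ)$ here. Once that is confirmed, the proof is a direct substitution and needs no additional ideas; no new spectral argument or operator-theoretic step is required beyond the reduction $T_\Omega-\lambda I = T_{\Omega-\lambda I_m}$.
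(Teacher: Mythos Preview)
Your approach is essentially the same as the paper's: reduce to $T_{\Omega-\lambda I_m}$, observe that the realization of $\Omega-\lambda I_m$ differs from \eqref{Omreal} only in the feedthrough term, and invoke the main result of \cite{GtHJR24b}. The paper carries out exactly these steps.

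There is one point you gloss over that the paper handles explicitly. The invertibility criterion from \cite{GtHJR24b} is stated under the standing hypothesis that $\det(\Omega(z)-\lambda I_m)$ is not identically zero in $z$; for $\lambda$ in the exceptional set $E(\Omega)$ of \eqref{EOm} this fails, and the criterion cannot be quoted directly. The paper disposes of such $\lambda$ first, using Lemma~\ref{L:EOm} to conclude $E(\Omega)\subset\sigma_{\textup{ess}}(T_\Omega)$, hence $E(\Omega)\cap\rho(T_\Omega)=\emptyset$, and only then applies \cite{GtHJR24b} for $\lambda\notin E(\Omega)$. Your ``bookkeeping'' remark about the feedthrough matrix is not the relevant obstruction; the real one is this non-degeneracy hypothesis on the determinant. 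Once you insert that preliminary step, your argument matches the paper's.
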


\begin{proof}[\bf Proof]
By Lemma \ref{L:EOm} we have that $E(\Om)\subset \si_{\textup{ess}}(T_\Om)$. Hence we can restrict to $\la\notin E(\Om)$, i.e., $\det(\Om(z)-\la I)$ is not uniformly zero. In that case, the main result of \cite{GtHJR24b} can be applied, and it follows that the operator $T_\Omega-\lambda I=T_{\Om-\la I_m}$ is invertible if and only if there is a solution $Q$ to the algebraic Riccati equation \eqref{eq:Ricc} for which $A_\circ$ and $\alpha_\circ$ are stable. 
\end{proof}

We have the following corollary, which concerns a special case.

\begin{ourcorollary}\label{C:polesindisc}
Let $\Omega$ be given by the realization \eqref{Omreal} and suppose that $\Omega(z)$ has all its poles in $\ov{\BD}$, and the limit at infinity of $\Omega(z)$ exists. Then $\lambda\in\sigma(T_\Omega)$ if and only if $\lambda\in\sigma(R_0)$ or there is a $\mu\notin \mathbb{D}$, $\mu$ not a pole of $\Om(z)$,  for which $\lambda\in\sigma(\Omega(\mu))$ or  $\det K(\la,\mu)=0$ for $\mu\in\BT$ a pole of $\Om(z)$, where 
\[
K(\la,z)=\mat{\alpha - zI & \beta \\ \gamma & R_0-\lambda I},\quad \la,z\in\BC.
\]
\end{ourcorollary}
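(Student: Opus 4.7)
The plan is to use the hypotheses to collapse the realization \eqref{Omreal} to one without its $(A,B,C)$-block, and then to apply Proposition \ref{prop:resolvent} together with two Schur complement computations on $K(\la,z)$.

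First I would show that under the hypotheses the $(A,B,C)$-block of the minimal realization \eqref{Omreal} has size zero. By minimality of $(C,A,B)$ the finite poles of $zC(I-zA)^{-1}B$ are exactly the points $1/\la$ with $\la\in\sigma(A)\setminus\{0\}$, all of which lie outside $\ov{\BD}$ since $A$ is stable; since the poles of $\ga(zI-\al)^{-1}\be$ lie in $\ov{\BD}$, no cancellation with the $A$-part can occur. The assumption that $\Om$ has no poles outside $\ov{\BD}$ therefore forces $\sigma(A)=\{0\}$, so $A$ is nilpotent. Then $zC(I-zA)^{-1}B=\sum_{k\geq 0}z^{k+1}CA^kB$ is a polynomial; the existence of $\lim_{z\to\infty}\Om(z)$ forces this polynomial to be constant, and its vanishing at $z=0$ makes it identically zero. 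Minimality of $(C,A,B)$ then forces $n_A=0$, so $\Om(z)=R_0+\ga(zI-\al)^{-1}\be$ and $L(\la,z)=K(\la,z)$.

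Next I would apply Proposition \ref{prop:resolvent} to this reduced realization. With the $(A,B,C)$-part absent, $Q$ is $0$-dimensional so \eqref{eq:Ricc} holds vacuously and $A_\circ$ is empty. The proposition then reads: $\la\in\rho(T_\Om)$ iff $R_0-\la I$ is invertible and $\al_\circ:=\al-\be(R_0-\la I)^{-1}\ga$ is stable. A Schur complement expansion of $K(\la,z)$ along the lower-right block, valid whenever $R_0-\la I$ is invertible, gives
\[
\det K(\la,z)=\det(R_0-\la I)\,\det(\al_\circ-zI),
\]
so the eigenvalues of $\al_\circ$ coincide with the zeros of $z\mapsto\det K(\la,z)$. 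Hence $\al_\circ$ fails to be stable precisely when there exists $\mu\in\BC\setminus\BD$ with $\det K(\la,\mu)=0$.

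Finally, I would split this alternative according to whether $\mu$ is a pole of $\Om$, i.e.\ whether $\mu\in\sigma(\al)$. For $\mu\notin\sigma(\al)$, a Schur complement expansion of $K(\la,\mu)$ along the upper-left block yields $\det K(\la,\mu)=\det(\al-\mu I)\det(\Om(\mu)-\la I)$, so the condition $\det K(\la,\mu)=0$ becomes $\la\in\sigma(\Om(\mu))$. For $\mu\in\sigma(\al)$, the hypothesis $\sigma(\al)\subseteq\ov{\BD}$ combined with $\mu\notin\BD$ forces $\mu\in\BT$, and the condition remains as stated. The $\la\in E(\Om)$ case excluded from Proposition \ref{prop:resolvent} is handled directly: by Lemma \ref{L:EOm} such $\la$ lies in $\si_{\textup{ess}}(T_\Om)\subseteq\sigma(T_\Om)$, and simultaneously $\la\in\sigma(\Om(\mu))$ for every non-pole $\mu$, so $\la$ belongs to the set described on the right-hand side. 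The main obstacle is the first paragraph: extracting from the global conditions on $\Om$ the purely algebraic conclusion that the minimal realization has no $A$-part at all; once that collapse is in hand, the remainder is a pair of routine Schur complement manipulations on the pencil $K(\la,z)$.
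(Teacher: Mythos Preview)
Your proposal is correct and follows essentially the same route as the paper: collapse the $(A,B,C)$-block to size zero, invoke Proposition~\ref{prop:resolvent} so that the resolvent condition becomes invertibility of $R_0-\la I$ together with stability of $\al_\circ=\al-\be(R_0-\la I)^{-1}\ga$, and then read off the eigenvalues of $\al_\circ$ and of $\Om(\mu)$ via the two Schur complements of $K(\la,z)$. Your first paragraph supplies a justification the paper leaves implicit (the paper simply asserts that $A$ acts on the zero-dimensional space), and your explicit treatment of $\la\in E(\Om)$ is a careful addition that the paper's corollary proof does not spell out.
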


\begin{proof}[\bf Proof] 
Under the assumption that $\Omega(z)$ has all its poles inside $\overline{\BD}$, and the limit at infinity of $\Omega(z)$ exists, the matrix $A$ is zero on the zero-dimensional space. In that case the Riccati equation is vacuous, and it follows from Proposition \ref{prop:resolvent} that $\la\in\BC$ is in the resolvent of $T_\Omega$ if and only if  $R_0-\lambda I$ is invertible (i.e., $\la\in \rho(R_0)$) and $\al_\circ:= \alpha -\beta (R_0-\lambda I)^{-1} \gamma$ is stable. 

Note that due to the observations above, the matrix polynomial $K(\la,z)$ coincides with the matrix polynomial $L(\la,z)$ in Theorem \ref{thm:essspec}. For $\la\in \rho(R_0)$, the Schur complement of $K(\la,z)$ with respect to $R_0-\lambda I$ is equal to $\al_\circ-z I$. Hence in that case $\al_\circ- zI$ is invertible if and only if $K(\la,z)$ is invertible. Since $\al_\circ$ is stable precisely when $\al_\circ - \mu I$ is invertible for each $\mu\notin\BD$, it follows that $\la$ is in the resolvent of $T_\Om$ if and only if $\la\in \rho(R_0)$ and $\det K(\la,\mu)\neq 0$ for all $\mu\notin\BD$. Stated in the negative, $\la$ is in $\si(T_\Om)$ if and only if $\la\in \si(R_0)$ or $\det K(\la,\mu)=0$ for some $\mu\notin\BD$. 

To complete the proof, note that for $\mu\notin\BD$ which is not a pole of $\Om(z)$, i.e., such that $\mu\in\rho(\al)$, the Schur complement of $K(\la,\mu)$ with respect to $\al - \mu I$ is equal to 
\[
R_0-\lambda I + \gamma (\mu I -\alpha)^{-1}\beta =\Omega(\mu)-\lambda I.
\]
Hence, in that case $\det K(\la,\mu)=0$ holds if and only if $\det(\Omega(\mu)-\lambda I)=0$, i.e., $\la\in\si(\Om(\mu))$. 
\end{proof}

\section{Examples}\label{S:Example}

We conclude the paper with a number of examples. The first three examples are of scalar symbols, which allows us to compare with the results from \cite{GtHJR19a}, where the spectrum of $T_\omega$ for a scalar symbol $\omega$ is described in detail. The fourth example gives a $2\times 2$ matrix example, illustrating the methods outlined here for a fairly simple case. The final example illustrates the methods for a case in which $E(\Omega)$ is not empty.

\begin{ourexample}
We start with an easy example. Let $\omega(z)=\frac{z-(a+ib)}{z-1}=1+\frac{1-(a+ib)}{z-1}$. This symbol is also considered in Example 4.1 in \cite{GtHJR19a}. For the matrices in the realization we can take $R_0=1$, $A=0$ on the zero-dimensional space, $B=0$, $C=0$, $\alpha=1$ on a one-dimensional space, $\gamma=1$, $\beta=1-a-bi$. We are in the setting of Corollary \ref{C:polesindisc}. The algebraic Riccati equation has the solution $q=0$ as a map from a zero-dimensional space to a one-dimensional space. The matrix $A_\circ$ is also zero on the zero-dimensional space, while $\alpha_\circ= 1-(1-a-bi)(1-\lambda)^{-1}$, for which we require $\la\neq 1=R_0$. The only further condition on invertibility is then that $|\alpha_\circ| <1$. So, $\lambda\in \sigma(T_\omega)$ if and only if $|\alpha_\circ|\geq 1$. In turn this is equivalent to $|a+bi-\lambda | \geq |1-\lambda|$. Note that this inequality is always satisfied for $\la=1$. For $\lambda=x+iy$ we have $|a+bi-\lambda | = |1-\lambda|$ if and only if $2by=a^2+b^2-1 +(2-2a)x$. This is precisely the line $L$ given in Example 4.1 in \cite{GtHJR19a}. For this example we retrieve the results as presented there.
\end{ourexample}

%\bigskip
\begin{ourexample}
Let $\omega(z)=\frac{z^2+1}{z-1}=z+1+\frac{2}{z-1}$.
The matrices in the realization can be taken to be $R_0=1, A=0, C=B=1, \alpha =1, \beta=2, \gamma=1$. Here $A$ and $\alpha$ are $1\times 1$ matrices. The algebraic Riccati equation for $T_\omega -\lambda I$ becomes
\[
q=(2-q)(1-q-\lambda)^{-1}.
\]
Further, $A_\circ=-(1-q-\lambda)^{-1}$ and $\alpha_\circ= 1-(2-q)(1-q-\lambda)^{-1}$. The algebraic Ricatti equation gives $\lambda=2-q-\frac{2}{q}$. Hence $1-q-\lambda=-1+\frac{2}{q}=\frac{2-q}{q}$. If follows that we should have $q\neq 2$ and $q\neq0$, but it is easily checked that $q=2$ and $q=0$ do not satisfy the algebraic Ricatti equation, irrespectively of the value of $\la$. We further get that $A_\circ=\frac{q}{q-2}$ and $\al_\circ=1-q$. Stability of $A_\circ$ and $\al_\circ$ just means $|A_\circ|<1$ and $|\al_\circ|<1$, which can be expressed as $\re q <1$ and $|1-q|<1$, respectively.

So, $\lambda$ is in the resolvent set of $T_\omega$ precisely when $\lambda=2-q-\frac{2}{q}$ for a $q$ that is in the open left half of the disc of radius $1$ with centre at $1$. 
The semi-circular part of the boundary of that set maps precisely to part of the essential spectrum, namely the part of the curve $z=x+iy$ parametrized by
$x=\cos(\theta), y=-\frac{\sin(\theta)\cos(\theta)}{1-\cos(\theta)}$ for $-\pi/2 <\theta < \pi/2$.
(Recall from \cite{GtHJR19a} that the essential spectrum of $T_\omega$ is given by the curve $\omega(\mathbb{T})$.)
The spectrum is the set to the left of that, the essential spectrum has an extra "loop" parametrized by this curve for $\pi/2<\theta<3 \pi/2$, see Figure 1.

\begin{figure}
\includegraphics[height=5cm]{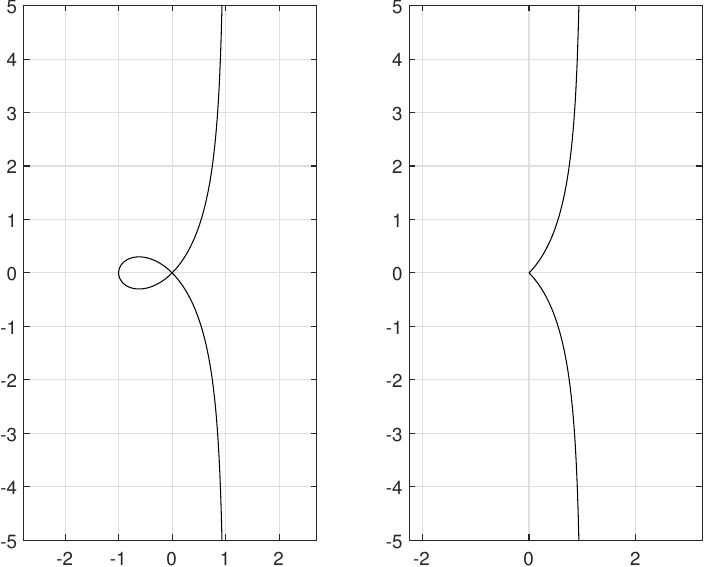}
\caption{
Left: the essential spectrum. Right: the boundary of the spectrum.
The spectrum is the set left of the boundary line.}
\end{figure}
\end{ourexample}

%\bigskip
\begin{ourexample}
Let $\omega(z)=\tfrac{z^3+3z+1}{z^2-1}=z+\tfrac{5}{2}\cdot \tfrac{1}{z-1}+\tfrac{3}{2}\cdot \tfrac{1}{z+1}$. (Compare Example 5.2 in \cite{GtHJR19a}.) The matrices in a realization can be taken to be
\[
R_0=0,\ A=0,\ B=C=1 \mbox{ (all $1\times 1$), and } \alpha=\begin{bmatrix} 1 & 0 \\ 0 & -1\end{bmatrix},\ \beta=\begin{bmatrix} 1 \\ 1 \end{bmatrix},\ \gamma=\begin{bmatrix} \tfrac{5}{2} & \tfrac{3}{2}\end{bmatrix}. 
\]
The algebraic Riccati equation for $T_{\omega -\lambda}$ becomes, with $q=\sbm{q_1\\q_2}$ a $2\times 1$ matrix:
\[
\begin{bmatrix} q_1 \\ q_2\end{bmatrix} =-\begin{bmatrix}1- q_1 \\ 1+ q_2\end{bmatrix}
( \tfrac{5}{2} q_1+\tfrac{3}{2}q_2+\lambda)^{-1}.
\]
Using this equation we can express $\lambda$ in $q_1$ and $q_2$. We obtain
\begin{align*}
\lambda& =-( \tfrac{5}{2} q_1+\tfrac{3}{2}q_2)+\tfrac{-1+q_1}{q_1} ,\\
\lambda& =-( \tfrac{5}{2} q_1+\tfrac{3}{2}q_2)+\tfrac{-1-q_2}{q_2} .
\end{align*}
This also gives a relation between $q_1$ and $q_2$: $\tfrac{-1+q_1}{q_1}=\tfrac{-1-q_2}{q_2}$, equivalently, 
\[
q_2=\tfrac{q_1}{1-2q_1}.
\]
Furthermore, we have
\begin{align*}
A_\circ & =( \tfrac{5}{2} q_1+\tfrac{3}{2}q_2+\lambda)^{-1} =\tfrac{q_1}{q_1-1}=-\tfrac{q_2}{q_2+1},
\\
\alpha_\circ & = \begin{bmatrix} 1 & 0 \\ 0 & -1\end{bmatrix} +
\begin{bmatrix}1- q_1 \\ 1+ q_2\end{bmatrix}
( \tfrac{5}{2} q_1+\tfrac{3}{2}q_2+\lambda)^{-1}\begin{bmatrix} \tfrac{5}{2} & \tfrac{3}{2}\end{bmatrix}\\
& %= \begin{bmatrix} \tfrac{5}{2} & \tfrac{3}{2}\end{bmatrix}
\begin{bmatrix} 1 & 0 \\ 0 & -1\end{bmatrix}
-\begin{bmatrix} q_1 \\ q_2 \end{bmatrix}\begin{bmatrix} \tfrac{5}{2} & \tfrac{3}{2}\end{bmatrix}=\begin{bmatrix}
1-\tfrac{5}{2}q_1 & -\tfrac{3}{2} q_1 \\
-\tfrac{5}{2}q_2 &-1 -\tfrac{3}{2} q_2
\end{bmatrix}.
\end{align*}
Solving for $\lambda$ completely in terms of $q_1$ we have
\[
\lambda=\tfrac{-1+q_1}{q_1}-\tfrac{5}{2}q_1-\tfrac{3}{2}\tfrac{q_1}{1-2q_1}.
\]
Set $z=\tfrac{-1+q_1}{q_1}$, or equivalently, $q_1=\tfrac{-1}{z-1}$. Then one checks that
$\tfrac{q_1}{1-2q_1}=-\tfrac{1}{z+1}$. It follows that
\[
\lambda= z+\tfrac{5}{2}\cdot \tfrac{1}{z-1}+\tfrac{3}{2}\cdot \tfrac{1}{z+1}=\omega(z).
\]
Rewriting $A_\circ$ in terms of $z$ we have $A_\circ=\tfrac{q_1}{q_1-1}=\tfrac{1}{z}$. Stability of $A_\circ$ is therefore equivalent to $|z| >1$. Also rewriting $\alpha_\circ$ in terms of $z$ we have
\[
\alpha_\circ=\alpha_\circ(z)=\begin{bmatrix}
1+\tfrac{5}{2}\cdot \tfrac{1}{z-1} & \tfrac{3}{2}\cdot \tfrac{1}{z-1} \\
\tfrac{5}{2}\cdot \tfrac{1}{z+1} & -1+ \tfrac{3}{2}\cdot \tfrac{1}{z+1} 
\end{bmatrix}.
\]
We see that the resolvent set of $T_\omega$ is the set of all $\lambda=\omega(z)$ for which
$|z| >1$ and $\alpha_\circ(z)$ is stable. Moreover, for $|z|\leq 1$ the matrix $A_\circ$ is not stable, so for those values of $z$ we have that $\lambda=\omega(z)\in\sigma(T_\omega)$. Notice that the map from $z$ to $\lambda$ is not one-to-one, so for a given $\lambda$ there may be a value of $z$ with $|z|>1$ for which $\lambda=\omega(z)$ such that $\alpha_\circ(z)$ is stable, while there may be another value of $z$ with $|z|>1$ for which $\lambda=\omega(z)$ such that $\alpha_\circ(z)$ is not stable. 

Our next remark is that the curve $\omega(\mathbb{T})$ gives the essential spectrum (see \cite{GtHJR19a}), and in each connected component of the complement of this curve the Fredholm index of the operator $T_\omega -\lambda I$ is independent of $\lambda$. In addition, the resolvent of $T_\omega$ is a union of connected components of the complement of the curve $\omega(\mathbb{T})$. Hence, we have to check invertibility of $T_\omega -\lambda I$ for just one point $\lambda$ in each component. See  Figure 2 for the connected components.

\begin{figure}
\includegraphics[height=5cm]{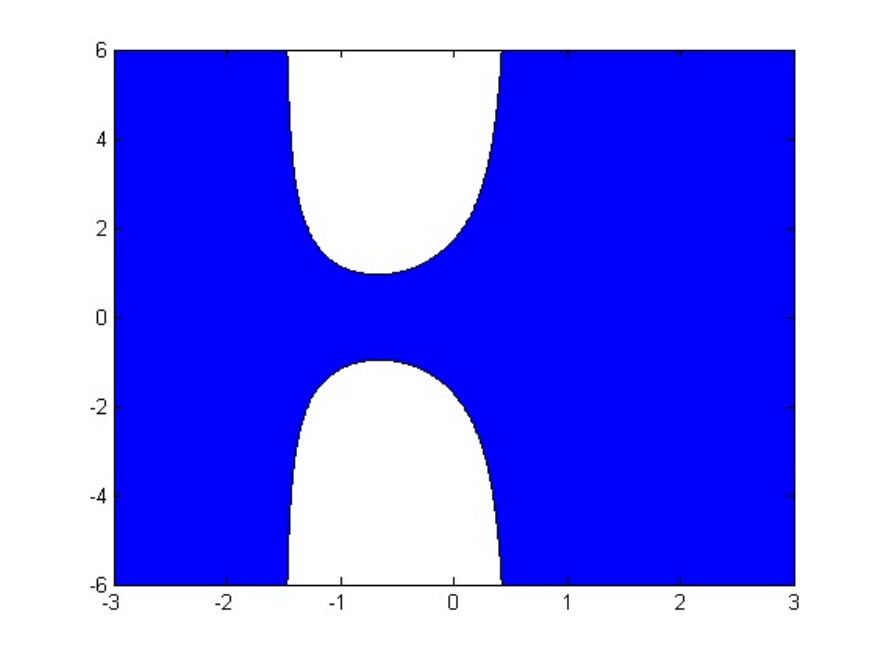}
\caption{Spectrum of $T_\omega$, where $\om(z)=\frac{z^3+3z+1}{z^2-1}$}
\end{figure}

Taking $z$ real we have that $\lambda=\omega(z)$ is real, and the whole real line is in one connected component. Taking $z=0$ gives $\lambda=-1$. For $z=0$ the matrix $A_\circ$ is not stable, so $\lambda=-1$ is in the spectrum of $T_\omega$, and hence the whole connected component containing the real line is in the spectrum of $T_\omega$. Taking $z=3i$ we have that $\lambda=-0.1+1.8i$ which is in the connected component not containing the real line and above the real line. For this value of $z$ we find $A_\circ$ stable (as $|z|>1$) and 
\[
\alpha_\circ(3i) =\begin{bmatrix} 
\tfrac{3}{4} -\tfrac{3}{4} i & -\tfrac{3}{20} - \tfrac{9}{20} i \\[1mm]
\tfrac{1}{4}  - \tfrac{3}{4} i & -\tfrac{17}{20} - \tfrac{9}{20} i 
\end{bmatrix},
\]
which has spectral radius $0.9572$ and hence is stable. Thus, this component belongs to the resolvent of $T_\omega$. Taking $z=-3i$, we obtain that $\lambda =-0.1-1.8i$, which is in the connected component not containing the real line and below the real line. We have
 $\alpha_\circ(-3i)=\overline{\alpha_\circ(3i)}$, and hence also $\alpha_\circ(-3i)$ is stable. This implies that the connected component containing the corresponding value of $\lambda$ is in the resolvent of $T_\omega$ as well. Hence the spectrum of $T_\omega$ is the area in blue in Figure 2, while the resolvent set of $T_\omega$ is the area in white in Figure 2.
\end{ourexample}

\begin{ourexample}
Let 
\[
\Omega(z)=\begin{bmatrix} \tfrac{1}{z-1} & \tfrac{z}{z+1} \\ \tfrac{z}{z-1} & \tfrac{1}{z+1}\end{bmatrix}=\begin{bmatrix} 0 & 1 \\ 1 & 0 \end{bmatrix} +
\begin{bmatrix} 1 & -1 \\ 1 & 1 \end{bmatrix} 
\begin{bmatrix} \tfrac{1}{z-1} & 0 \\ 0 & \tfrac{1}{z+1}\end{bmatrix}.
\]
Since there are no poles in $\BC\setminus \overline{\BD}$, we see that a minimal realization of the form \eqref{Omreal} with $A$ stable and $\al$ semi-stable, has $A=0$ on a zero-dimensional space, $B=0$ ($0\times 2$), $C=0$ ($2\times 0$), while the other matrices can be taken as 
\[
R_0=\begin{bmatrix} 0 & 1 \\ 1 & 0 \end{bmatrix},\quad \gamma=\begin{bmatrix} 1 & -1 \\ 1 & 1 \end{bmatrix},\quad \alpha=\begin{bmatrix} 1 & 0 \\ 0 & -1\end{bmatrix},\quad \beta=\mat{1&0\\0&1}=I_2.
\] 
This example fits in the context of Corollary \ref{C:polesindisc}. Hence the algebraic Riccati equation \eqref{eq:Ricc} has $Q=0$ as a solution, interpreted as a $2\times 0$ matrix. The matrix $A_\circ$ is the zero matrix on a zero-dimensional space, the matrix $\alpha_\circ$ is given as a function of $\lambda$ by
\begin{align*}
\alpha_\circ&=\al_\circ(\la)= \alpha-\beta(R_0-\lambda I)^{-1}\gamma 
=\begin{bmatrix} 1 & 0 \\ 0 & -1\end{bmatrix}-\begin{bmatrix} -\lambda & 1 \\ 1 & -\lambda\end{bmatrix}^{-1} \begin{bmatrix} 1 & -1 \\ 1 & 1 \end{bmatrix} \\
&=
\begin{bmatrix} 1 & 0 \\ 0 & -1\end{bmatrix}+\begin{bmatrix}
\tfrac{1}{\lambda-1} & -\tfrac{1}{\lambda+1} \\
\tfrac{1}{\lambda-1} & \tfrac{1}{\lambda+1}\end{bmatrix} 
= \begin{bmatrix}
\tfrac{\lambda}{\lambda-1} & -\tfrac{1}{\lambda+1} \\
\tfrac{1}{\lambda-1} & -\tfrac{\lambda}{\lambda+1}\end{bmatrix}, \mbox{ for $\la\neq \pm 1$}.
\end{align*}
Set $z=\tfrac{1}{\lambda}$, then one readily checks that $\alpha_\circ(\lambda)=-\Omega(z)=-\Omega(\tfrac{1}{\lambda})$. It follows that $T_\Omega -\lambda I$ is invertible if and only if $-\Omega(\tfrac{1}{\lambda})$ is a stable matrix. However, since $\det \Omega(z)=-1$ for all $z$ the matrix $\Omega(z)$ cannot be stable for any $z\in \mathbb{C}$. It follows that the resolvent set is empty and hence $\sigma(T_\Omega)=\mathbb{C}$. 

Next, we determine the essential spectrum of $T_\Omega$. The matrix pencil $L(\lambda, z)$ is given by 
\[
L(\lambda,z)=\begin{bmatrix} \alpha -z I & \beta \\ \gamma & R_0-\lambda\end{bmatrix}
= \begin{bmatrix}
1-z & 0 & 1 & 0 \\ 0 & -1 -z & 0 & 1 \\ 1 & -1 & -\lambda & 1 \\ 1 & 1 & 1 & -\lambda
\end{bmatrix}.
\]
We obtain 
\[
\det L(\lambda, z)= \lambda^2 (z^2-1) -(z^2-1) -2\lambda z = (z^2-1)(\lambda^2-1)-2\lambda z.
\]
Note that for each $\la$, the polynomial $\det L(\lambda, z)$ is never identically zero. Hence $E(\Om)=\emptyset$.
%
%Secondly, we show that the elements of $\sigma(R_0)=\{1, -1\}$ are not in the essential spectrum of $T_\Omega$. To this end, consider
%\[
%(\Omega(z)-I)^{-1} = \begin{bmatrix} \tfrac{1}{2}(z-1) & \tfrac{1}{2}(z-1) \\
%\tfrac{1}{2}(z+1) & \tfrac{(z+1)(z-2)}{2z}\end{bmatrix},
%\]
%and
%\[
%(\Omega(z)+I)^{-1} = \begin{bmatrix}  \tfrac{(z-1)(z+2)}{2z}
%  & -\tfrac{1}{2}(z-1) \\
%-\tfrac{1}{2}(z+1) & \tfrac{1}{2}(z+1)\end{bmatrix},
%\]
%neither of which have a pole on the unit circle. It follows that $\Om(z)-I$ and $\Om(z)+I$ both have no zero on the unit circle, and hence $T_{\Om} \pm I$ are both Fredholm. 
%
According to Theorem \ref{thm:essspec}, $\lambda$ is in the essential spectrum if and only if it is an eigenvalue of $\Omega(z)$ for some $z\neq \pm1$ on the unit circle or $\det L(\la,\pm1)=0$. Our formula for $\det L(\la,z)$ shows that the latter occurs only when $\la=0$. 
Since the determinant of $\Omega(z)$ is $-1$ and the trace of $\Omega(z)$ is $\tfrac{2z}{z^2-1}$, the characteristic polynomial of $\Omega(z)$ is given by
\[
p_{\Om(z)}(\la)=\lambda^2-\tfrac{2z}{z^2-1}\lambda -1 =(\lambda -\tfrac{z}{z^2-1})^2-1 -\tfrac{z^2}{(z^2-1)^2}.
\]
Hence the characteristic equation of $\Omega(z)$ becomes
\[
(\lambda -\tfrac{z}{z^2-1})^2 =1+\tfrac{z^2}{(z^2-1)^2 }=\tfrac{z^4-z^2+1}{z^4-2z^2+1}.
\]
Specializing to $z=e^{i\theta}$ gives
\[
(\lambda - \tfrac{e^{i\theta}}{e^{i2\theta}-1} )^2= \tfrac{e^{i4\theta}-e^{i2\theta}+1}{e^{i4\theta}-2e^{i2\theta}+1} =\tfrac{e^{i2\theta}+e^{-i2\theta}-1}{e^{i2\theta}+e^{-i2\theta}-2 } =
\tfrac{1-2\cos(2\theta)}{2-2\cos(2\theta)} =\tfrac{3-4\cos^2(\theta)}{4\sin^2(\theta)}.
\]
We obtain that for $\theta$ such that $3-4\cos^2(\theta) \geq 0$ we have
\[
\lambda =\tfrac{e^{i\theta}}{e^{i2\theta}-1} \pm \tfrac{\sqrt{3-4\cos^2(\theta)}}{2|\sin(\theta)|}=
\tfrac{1}{e^{i\theta}-e^{-i\theta}}  \pm \tfrac{\sqrt{3-4\cos^2(\theta)}}{2|\sin(\theta)|}
= -\tfrac{i}{2\sin(\theta)} \pm \tfrac{\sqrt{3-4\cos^2(\theta)}}{2|\sin(\theta)|}.
\]
Observe that these values of $\lambda$ are on the unit circle. Noticing that $3-4\cos^2(\theta) \geq 0$ precisely when $\tfrac{\pi}{6}\leq \theta \leq \tfrac{5\theta}{6}$ or $\tfrac{7\pi}{6}\leq \theta \leq \tfrac{11\theta}{6}$ we have that for these values of $\theta$ the two values of $\lambda$ trace out two curves which lie on the unit circle and connect respectively $e^{i\pi/6}$ to $e^{i5\pi/6}$ and $e^{i7\pi/6}$ to $e^{i11\pi/6}$.

For the case where $3-4\cos^2(\theta) <0$ the values of $\lambda$ are given by
\[
\lambda=\left(-\tfrac{1}{2\sin(\theta)} \pm \tfrac{\sqrt{-3+4\cos^2(\theta)}}{2|\sin(\theta)|}\right) \cdot i .
\]
The values of $\theta$ for which $3-4\cos(\theta)^2 <0$ are given by $-\tfrac{\pi}{6} <\theta <\tfrac{\pi}{6}$ or $\tfrac{5\pi}{6}<\theta <\tfrac{7\pi}{6}$. 
One easily checks that for these values of $\theta$ the two values of $\lambda$ trace out the whole imaginary line, excluding $0$ since $\la=0$ would occur at $\theta=0$ or $\theta=\pi$ in which case $\sin(\theta)=0$. However, $\la=0$ is in the essential spectrum because that is when $\det L(\la,\pm1)=0$.

It follows that the essential spectrum of $T_\Omega$ is the union of the imaginary line with two arcs on the unit circle. See Figure 3.

\begin{figure}
\includegraphics[height=4cm]{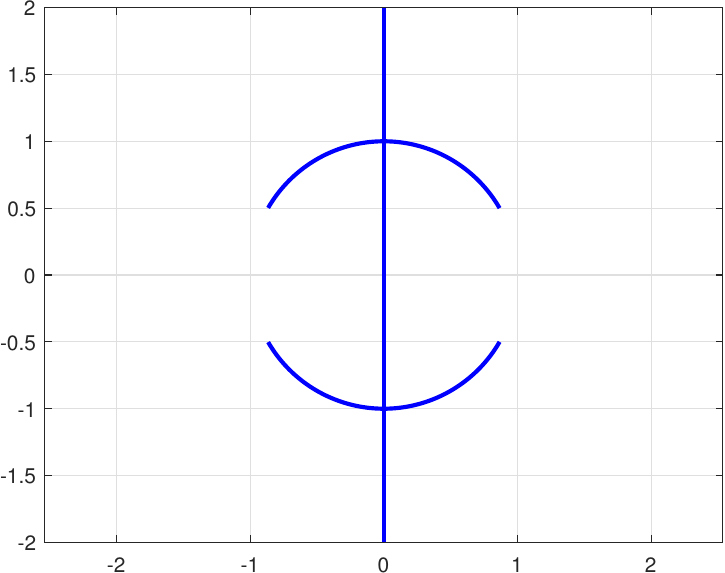}
\caption{The essential spectrum of $T_\Omega$ for $\Omega(z)=\begin{bmatrix} \tfrac{1}{z-1} & \tfrac{z}{z+1} \\ \tfrac{z}{z-1} & \tfrac{1}{z+1}\end{bmatrix}$.}
\end{figure}
\end{ourexample}

\begin{ourexample}
We return to Example \ref{Ex1}. In that case we have
\[
\Omega(z)=\begin{bmatrix} 2 & \tfrac{1}{z-1} \\ 0 & 2 \end{bmatrix} = \begin{bmatrix} 2  & 0 \\ 0 & 2\end{bmatrix} + \begin{bmatrix} 1 \\ 0 \end{bmatrix} (z-1)^{-1} \begin{bmatrix} 0 & 1\end{bmatrix}.
\]
Observe that the right hand side of the above equation constitutes a minimal realization of $\Omega$.  We saw before that $E(\Omega)=\{2\}$. Further,
\[
L(\lambda, z)=\begin{bmatrix} 1-z & 0 & 1 \\ 1 & 2-\lambda & 0 \\ 0 & 0 & 2-\lambda\end{bmatrix},
\]
so that $\det L(\lambda,z)=(2-\lambda)^2 (1-z)$. It follows that all $\lambda\not=2$ are in the essential spectrum, and since the essential spectrum is closed, we have $\sigma_\tu{ess} (T_\Omega)=\mathbb{C}$. That $2\in \sigma_\tu{ess} (T_\Omega)=\mathbb{C}$ also follows since $E(\Om)\subset \sigma_\tu{ess} (T_\Omega)$, by Lemma \ref{L:EOm}.
\end{ourexample}

\paragraph{\bf Acknowledgements}
This work is based on research supported in part by the National Research Foundation of South Africa (NRF, Grant Numbers 118513, 127364 and 145688) and the DSI-NRF Centre of Excellence in Mathematical and Statistical Sciences (CoE-MaSS). Any opinion, finding and conclusion or recommendation expressed in this material is that of the authors and the NRF and CoE-MaSS do not accept any liability in this regard.

\end{document}